\newtheorem{theorem}{Theorem}[section]
\newtheorem{proposition}[theorem]{Proposition}
\newtheorem{lemma}[theorem]{Lemma}
\theoremstyle{remark}
\newtheorem{remark}[theorem]{Remark}
\def\blfootnote{\xdef\@thefnmark{}\@footnotetext}
\providecommand{\keywords}[1]{\textbf{Keywords: } #1}
\providecommand{\subjclass}[1]{\textbf{Mathematics Subject Classification 2010: } #1}
\DeclarePairedDelimiterX\MeijerM[3]{\lparen}{\rparen}{\begin{matrix}#1 \\ #2\end{matrix}\delimsize\vert\,#3}
\DeclareMathOperator{\Tr}{Tr}
\numberwithin{equation}{section}
\title{Jacobi polynomial moments and products of random matrices}
\author{Wolfgang Gawronski\thanks{Department of Mathematics, University of Trier, 54286 Trier, Germany. E-mail: Gawron@uni-trier.de}, Thorsten Neuschel, Dries Stivigny \thanks{Department of Mathematics, KU Leuven, Celestijnenlaan 200B box 2400, BE-3001 Leuven, Belgium. E-mail: Thorsten.Neuschel@wis.kuleuven.be, Dries.Stivigny@wis.kuleuven.be}}
\date{\today}
\begin{document}

\maketitle


\begin{abstract}Motivated by recent results in random matrix theory we will study the distributions arising from products of complex Gaussian random matrices and truncations of Haar distributed unitary matrices. We introduce an appropriately general class of measures and characterize them by their moments essentially given by specific Jacobi polynomials with varying parameters. Solving this moment problem requires a study of the Riemann surfaces associated to a class of algebraic equations. The connection to random matrix theory is then established using methods from free probability.
\end{abstract}

\keywords{Moment problem; Jacobi polynomials; Raney distributions; Random matrices; Distribution of eigenvalues; Free probability theory; Free multiplicative convolution}

\vspace*{0.3cm}
\subjclass{30E05 ; 15B52 , 30F10 , 46L54}

\section{Introduction}

Products of random matrices are subject to research for many years now. It dates back to the 1960's when Furstenberg and Kesten \cite{Furstenberg_Kesten} studied products of random matrices letting the number of factors grow to infinity while keeping the dimension fixed. This work was very influential and had applications to, for example, Sch\"odinger operator theory \cite{Bougerol_Lacroix}. A more recent development is the study of the distribution of the eigenvalues and (squared) singular values of products consisting of a fixed number of factors as the dimensions grow to infinity. Different approaches have been found, e.g. free probability theory, to obtain the so-called limiting global eigenvalue or (squared) singular value distribution \cite{Alexeev_Gotze, Burda_Janik_Waclaw, Burda_1, Burda_extended, Dupic, Gotze_Tikhomirov, ORourke_Soshnikov, Penson_Z}. In particular, the product of independent standard complex Gaussian matrices (these matrices are called Ginibre random matrices) has attracted interest with applications in, e.g., so-called multiple-input and multiple-output (MIMO) communication networks \cite{Akemann_Kieburg_Wei} (see \cite{Tulino_Verdu} for a more general introduction). In this context, it is also of interest to consider products involving Ginibre matrices and other random matrices. In \cite{Forrester} this was done for the product of Ginibre matrices and inverses of Ginibre matrices and in \cite{Kuijlaars_Stivigny} for the product of Ginibre matrices and truncations of unitary matrices (for applications, see, e.g., the introduction of \cite{Akemann_Nagao}). 

Let $r,s \in \mathbb{N} = \{0, 1, 2, \ldots\}$ with $s < r$ and let $T_1, \ldots, T_s$ be $s$ independent truncations of Haar distributed unitary matrices (such a matrix $T_j$ can be considered to be the upper left block of a Haar distributed unitary matrix). Moreover, let $G_{s+1}, \ldots, G_r$ be $r-s$ independent Ginibre random matrices. The motivation of this paper is to characterize the limiting distributions of the squared singular values of the product
\begin{equation*}
Y_{r,s} := G_r \ldots G_{s+1} T_s \ldots T_1.
\end{equation*}
This is equivalent with studying the limiting eigenvalue distribution of the Wishart-type matrix $Y_{r,s}^{\ast}Y_{r,s}$. The case $s = 0$, where we only have Ginibre random matrices, has been studied in \cite{Penson_Z, Neuschel}. The limiting distribution was shown to be characterized by its moments
\begin{equation*}
FC_r(n) := \frac{1}{rn+1} {rn+n \choose n}, \qquad n \in \mathbb{N},
\end{equation*}
for fixed $r$. These numbers are called Fuss-Catalan numbers of order $r$ and historically arose in the context of combinatorial problems \cite{Knuth}. We will denote the corresponding distributions by $FC_r$. In case $s=1$, it turns out that the limiting distribution of the squared singular values of $Y_{r,1}$ coincides with a specific Raney distribution (Theorem \ref{thm: connection_rmt}, Remark \ref{remark: lim_distr_case_s_1} and also \cite{MNPZ, Neuschel_Stivigny}). These distributions are a generalization of the Fuss-Catalan distributions and are defined by their moments, the so-called Raney numbers
\begin{equation*}
R_{\alpha, \beta}(n) := \frac{\beta}{n\alpha + \beta} {n\alpha + \beta \choose n}, \qquad n \in \mathbb{N},
\end{equation*}
for given $\alpha, \beta \in \mathbb{N}$ such that $\alpha > 1$ and $0 \leq \beta \leq \alpha$. These numbers have a combinatorial interpretation as well, see \cite{Forrester_Liu} for an overview. We will denote the corresponding distributions by $R_{\alpha, \beta}$. One can easily see that $R_{r+1, 1}(n) = FC_r(n)$ and so in the cases $s=0$ and $s=1$ the above mentioned limiting distributions of squared singular values are contained in the class of Raney distributions. However, for $s > 1$ the limiting distribution turns out not to belong to this class anymore. The main goal of this paper is to introduce and characterize an appropriately general class of measures that contains all these limiting distributions. In the language  of free probability theory, this means we want to characterize a class of measures containing all multiplicative free convolutions (see Section 3.1) of the form
\begin{equation*} 
FC_{r-s} \boxtimes R_{1, \frac{1}{2}}^{\boxtimes s}.
\end{equation*}

With this in mind, in Section \ref{sec: jac_moments} we introduce a sequence, depending on $a > 0$ and $r,s \in \mathbb{N}$ such that $s < r$, of positive numbers
\begin{equation} \label{eq: def_mu_0}
J_{r,s,a}(0) := a
\end{equation}
and for $n \in \mathbb{N}$ and $n \geq 1$
\begin{equation} \label{eq: def_mu_n}
J_{r,s,a}(n) := \frac{a}{n} \left(\frac{a^r}{(1+a)^s}\right)^n P_{n-1}^{(\alpha_{n-1}, \beta_{n-1})}\left(\frac{1-a}{1+a}\right),
\end{equation}
where $P_n^{(\alpha_n, \beta_n)}(x)$ are the Jacobi polynomials with varying parameters $\alpha_n = rn + r + 1$ and $\beta_n = -(r+1-s)n - (r+2-s)$ as defined in \cite{Szego_Orth_Pol}. We will prove in Section 2 that these numbers indeed form a (Hausdorff) moment sequence of a compactly supported measure $J_{r,s,a}$. More precisely, we prove the following theorem.
\begin{theorem} 
Let $r,s \in \mathbb{N}$ such that $s < r$ and let $a$ be a positive real number. Then there exists a unique measure $J_{r,s,a}$ on $[0, x^{\ast}]$ with total mass $a$ such that the moments are given by the numbers \eqref{eq: def_mu_0} and \eqref{eq: def_mu_n}. 
\end{theorem}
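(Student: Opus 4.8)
The plan is to realize the proposed numbers as the power moments of a measure by analyzing the generating function they produce, and then to invoke compactness for uniqueness. Writing
\[
M(z) := \sum_{n=0}^{\infty} J_{r,s,a}(n)\, z^{n},
\]
the measure $J_{r,s,a}$, if it exists, must satisfy $M(z)=\int (1-xz)^{-1}\, dJ_{r,s,a}(x)$ for small $z$, so that $M$ is, up to the usual change of variable, the Cauchy--Stieltjes transform of the sought measure and $M(0)=a$ is its total mass. Pulling out the geometric factor $t:=a^{r}/(1+a)^{s}$ and setting $\xi:=(1-a)/(1+a)$, I would write $M(z)=a\bigl(1+F(tz)\bigr)$ with $F(u)=\sum_{n\ge 1}\tfrac1n P_{n-1}^{(\alpha_{n-1},\beta_{n-1})}(\xi)\,u^{n}$, so that
\[
F'(u)=\sum_{m=0}^{\infty} P_{m}^{(\alpha_{m},\beta_{m})}(\xi)\, u^{m}
\]
is a generating function of Jacobi polynomials whose parameters $\alpha_m=rm+r+1$ and $\beta_m=-(r+1-s)m-(r+2-s)$ grow linearly in the degree.

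First I would show that this linearly parametrized generating function is an algebraic function. The characteristic prefactor $\tfrac1n$ together with the degree shift from $n$ to $n-1$ is the signature of Lagrange--B\"urmann inversion: using the explicit binomial expansion of $P_m^{(\alpha_m,\beta_m)}$ at the point $\xi$, where $(\xi-1)/2=-a/(1+a)$ and $(\xi+1)/2=1/(1+a)$ take convenient values, the coefficients collapse into a single binomial sum, and Lagrange inversion then identifies $M$ --- equivalently a suitably normalized variable $w=w(z)$ --- as the branch analytic at the origin of an algebraic equation
\[
P(w,z)=0
\]
of Raney/Fuss--Catalan type, i.e.\ a polynomial relation equivalent to $w=z\,\phi(w)$ for an explicit rational $\phi$ built from $w^{r}$, $(1+w)^{s}$ and the parameter $a$. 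This is precisely the class of algebraic equations, and their Riemann surfaces, announced in the abstract; I would record the equation and check that for $s=0$ and $s=1$ it reduces to the known Fuss--Catalan and Raney generating functions, which fixes the normalization.

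With the algebraic equation in hand, I would analyze the associated Riemann surface to produce the measure. The aim is to show that the branch of $M$ analytic near $z=0$ extends to a function holomorphic off a compact real interval, with the right endpoint $x^{\ast}$ determined by the real branch point(s) of $P(w,z)=0$, and that it has the Nevanlinna/Pick property (the appropriate sign condition on $\Im$ after the change of variable) forcing the boundary measure to be nonnegative. Concretely, I would locate the branch points of the curve, track the relevant sheet along the real axis, and apply the Stieltjes--Perron inversion formula
\[
dJ_{r,s,a}(x)=\tfrac{1}{\pi}\,\Im\,\bigl(\text{boundary value}\bigr)\,dx
\]
on $(0,x^{\ast})$, with a possible atom at the hard edge $0$, reading off that the total mass is $M(0)=a$ and that the higher moments reproduce \eqref{eq: def_mu_n} by construction.

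The main obstacle will be the two analytic verifications on the Riemann surface: first, identifying the correct sheet and the location of $x^{\ast}$ so that the transform is holomorphic on $\C\setminus[0,x^{\ast}]$, and second --- the genuinely delicate point --- proving that the density obtained from the inversion formula is nonnegative throughout $(0,x^{\ast})$, which amounts to controlling the sign of the imaginary part of the algebraic branch along the cut uniformly in the parameters $r,s,a$. Once existence with compact support is secured, uniqueness is immediate: a measure supported on a bounded interval is determined by its moments (the Hausdorff moment problem on $[0,x^{\ast}]$ is determinate), so $J_{r,s,a}$ is the unique such measure.
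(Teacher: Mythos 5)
Your overall architecture coincides with the paper's: the $\tfrac1n$ prefactor and degree shift are indeed the signature of Lagrange--B\"urmann inversion (the paper's Lemma \ref{lemma: mu_n_id_derivative} rewrites $J_{r,s,a}(n)$ as $\tfrac{1}{n!}\tfrac{d^{n-1}}{dz^{n-1}}\bigl(z^{n(r+1)}(1+z)^{-ns}\bigr)\big|_{z=a}$ via Leibniz' rule and the binomial form of the Jacobi polynomials), the resulting algebraic equation is $w^{r+1}-x(w-a)(w+1)^{s}=0$ with the branch $w(x)\to a$ at infinity, the branch points are located and the branch continued to $\mathbb{C}\setminus[0,x^{\ast}]$ (Proposition \ref{prop: study_alg_eq}), the candidate density is the jump of $w(x)/x$ across the cut, the moment identities follow from a residue computation, and uniqueness is Hausdorff determinacy on a compact interval. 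All of this you have, at least in outline.

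However, there is a genuine gap at exactly the point you flag as ``the genuinely delicate point'': nonnegativity of the density obtained from Stieltjes--Perron inversion, and here you offer no mechanism. Invoking a ``Nevanlinna/Pick property'' is circular: $M$ (equivalently $G$) has the Pick property precisely when the boundary measure is nonnegative, which is the statement to be proved; it does not follow from positivity of the numbers $J_{r,s,a}(n)$, since a positive moment sequence need not come from a positive measure (one would need complete monotonicity of the finite differences, which you do not verify either). The paper closes this gap with an argument-principle computation: writing $z=f(w(z))$ with $f(w)=w^{r+1}/\bigl((w-a)(w+1)^{s}\bigr)$, it supposes $w_{+}(x)=w_{-}(x)$ for some $x\in(0,x^{\ast})$; then the image $w(K_x)$ of the circle $|z|=x$ is a \emph{closed} contour crossing the real axis once on the negative axis and once between $0$ and $a$, never encircling $a$, so that
\begin{equation*}
\frac{1}{2\pi i}\oint_{w(K_x)}\left(\frac{r+1}{u}-\frac{1}{u-a}-\frac{s}{u+1}\right)du \in \{\,r+1,\ r+1-s,\ -(r+1),\ -(r+1-s)\,\},
\end{equation*}
whereas the same integral must equal $1$; contradiction. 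Hence $w_{+}\neq w_{-}$, so the continuous function $\rho$ never vanishes on $(0,x^{\ast})$, and its sign is fixed positive because $\int_0^{x^{\ast}}\rho\,dx=J_{r,s,a}(0)=a>0$. Without an argument of this kind (or a direct verification of the Hausdorff conditions), your construction only yields a signed measure with the right moments, and the theorem is not established. A minor additional point: since $w(0)=0$, no atom at the hard edge is needed, contrary to the possibility you leave open.
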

The right endpoint \(x^{\ast}\) of the support of $J_{r,s,a}$ is defined below in \eqref{eq: x_ast}. The proof of this result heavily relies on a study of the Riemann surface associated to the algebraic equation
\begin{equation*}
w^{r+1} - x(w-a)(w+1)^s = 0
\end{equation*}
which is done in Proposition \ref{prop: study_alg_eq}. 

In Section \ref{sec: appl_rmt} we establish the connection with random matrix theory and we prove in Theorem \ref{thm: connection_rmt} that for $s < r$ we have
\begin{equation*}
J_{r,s,1} = FC_{r-s} \boxtimes R_{1, \frac{1}{2}}^{\boxtimes s}.
\end{equation*}
In particular, we can identify $J_{r,s,1}$ in the case $s = 0$ with the Fuss-Catalan distribution $FC_r$ and in the case $s = 1$ with the Raney distribution $R_{\frac{r+1}{2}, \frac{1}{2}}$. 

Finally, we want to emphasize the remarkable fact that the combination of Theorem \ref{thm: measure_mu} and Theorem \ref{thm: connection_rmt} establishes a further connection between random matrix theory and the theory of classical orthogonal polynomials.

\section{Jacobi polynomial moments} \label{sec: jac_moments}

We start by showing that $J_{r,s,a}(n)$ is a positive-valued sequence. 
\begin{proposition} \label{prop: positivity_mu_n}
Let $a > 0$ be a positive real number and $r,s$ positive integers such that $s \leq r$. Then $J_{r,s,a}(n) > 0$ for all $n \in \mathbb{N}$ where $J_{r,s,a}(n)$ is given by \eqref{eq: def_mu_n}. 
\end{proposition}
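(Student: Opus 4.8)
The plan is to reduce positivity to a single hypergeometric evaluation and then choose a representation in which every term of the (terminating) series is manifestly positive. The prefactor $\frac{a}{n}\bigl(a^{r}/(1+a)^{s}\bigr)^{n}$ in \eqref{eq: def_mu_n} is strictly positive for $a>0$, and $J_{r,s,a}(0)=a>0$, so it suffices to show that the Jacobi value $P_{n-1}^{(\alpha_{n-1},\beta_{n-1})}\bigl(\frac{1-a}{1+a}\bigr)$ is positive for every $n\ge 1$. Evaluating the varying parameters at index $n-1$ gives $\alpha_{n-1}=rn+1$ and $\beta_{n-1}=-(r+1-s)n-1$; since $s\le r$ we have $\beta_{n-1}\le -n-1<-1$, so $\beta_{n-1}$ lies outside the classical range $\beta>-1$ and none of the standard positivity or orthogonality statements for Jacobi polynomials apply directly. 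This is the main obstacle, and it forces an explicit computation rather than an appeal to a known sign pattern.

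First I would invoke the Gauss hypergeometric representation of the Jacobi polynomials (see \cite{Szego_Orth_Pol}), $P_m^{(\alpha,\beta)}(x)=\frac{(\alpha+1)_m}{m!}\,{}_2F_1\bigl(-m,\,m+\alpha+\beta+1;\,\alpha+1;\,\tfrac{1-x}{2}\bigr)$. With $x=\frac{1-a}{1+a}$ one computes $\frac{1-x}{2}=\frac{a}{1+a}\in(0,1)$, while the upper parameter sum collapses to $m+\alpha_{n-1}+\beta_{n-1}+1=sn$ and $\alpha_{n-1}+1=rn+2$. Thus the Jacobi value is a strictly positive multiple of ${}_2F_1\bigl(-(n-1),\,sn;\,rn+2;\,\frac{a}{1+a}\bigr)$. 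The terms of this terminating series alternate in sign because of the negative integer first parameter, so positivity is not yet visible and a transformation is needed.

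The key step is then Pfaff's transformation ${}_2F_1(A,B;C;z)=(1-z)^{-A}\,{}_2F_1\bigl(A,\,C-B;\,C;\,\frac{z}{z-1}\bigr)$. Taking $A=-(n-1)$, $B=sn$, $C=rn+2$ and $z=\frac{a}{1+a}$ gives $1-z=\frac{1}{1+a}$ and $\frac{z}{z-1}=-a$, so that, up to the positive factor $(1+a)^{-(n-1)}$, the Jacobi value becomes a positive multiple of ${}_2F_1\bigl(-(n-1),\,(r-s)n+2;\,rn+2;\,-a\bigr)=\sum_{k=0}^{n-1}\frac{(-(n-1))_k\,((r-s)n+2)_k}{(rn+2)_k\,k!}(-a)^k$. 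The crucial cancellation is $(-(n-1))_k(-a)^k=\frac{(n-1)!}{(n-1-k)!}\,a^k>0$: the alternating sign of the Pochhammer symbol is exactly compensated by the sign of the argument $-a$. Since $s\le r$ forces $(r-s)n+2\ge 2>0$, the factors $((r-s)n+2)_k$ and $(rn+2)_k$ are positive, so every summand is strictly positive. Hence the whole expression is positive, which yields $J_{r,s,a}(n)>0$ and completes the argument. The entire difficulty is thus concentrated in selecting the transformation that turns the alternating series into a positive one; once $-a$ appears as the argument the conclusion is immediate.
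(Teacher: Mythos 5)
Your proof is correct, and it takes a genuinely different route from the paper's. Your parameter bookkeeping checks out: $\alpha_{n-1}=rn+1$, $\beta_{n-1}=-(r+1-s)n-1$, so $m+\alpha_{n-1}+\beta_{n-1}+1=sn$ with $m=n-1$, the argument becomes $\frac{1-x}{2}=\frac{a}{1+a}\in(0,1)$, and Pfaff's map sends it to $\frac{z}{z-1}=-a$ with the positive factor $(1+a)^{-(n-1)}$; the cancellation $(-(n-1))_k(-a)^k=\frac{(n-1)!}{(n-1-k)!}a^k$ and the positivity of $((r-s)n+2)_k$ and $(rn+2)_k$ (this is where $s\le r$ enters) then make every term of the terminating series strictly positive. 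The paper argues quite differently: it first proves the identity $J_{r,s,a}(n)=\frac{1}{n!}\frac{d^{n-1}}{dz^{n-1}}\bigl(z^{n(r+1)}(1+z)^{-ns}\bigr)\big|_{z=a}$ (Lemma \ref{lemma: mu_n_id_derivative}, via Leibniz' rule and the finite sum representation of Jacobi polynomials), and then establishes the stronger claim that \emph{all} derivatives of order $k=0,\dots,n$ of $z^{n(r+1)}(1+z)^{-ns}$ are positive for $z>0$, reducing by Leibniz to the case $r=s$ and using a Gamma-integral representation together with a sign analysis of a Laguerre-type sum from Szeg\H{o}. Your approach buys a short, self-contained computation in which positivity is visible term by term --- indeed you obtain an explicit positive-coefficient polynomial expansion of the Jacobi value in $a$, avoiding integral representations entirely. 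The paper's route buys reuse: Lemma \ref{lemma: mu_n_id_derivative} is needed again in the proof of Theorem \ref{thm: measure_mu}, where it identifies the numbers $J_{r,s,a}(n)$ with the Lagrange--B\"urmann coefficients of the branch $w(x)$, so the derivative identity does double duty, and the intermediate positivity of the lower-order derivatives is slightly stronger information than the single evaluation you establish. One point worth making explicit in your write-up: both the hypergeometric representation of $P_m^{(\alpha,\beta)}$ and Pfaff's transformation are here identities between terminating (polynomial) expressions, so they remain valid for the nonclassical parameter $\beta_{n-1}<-1$; since $\alpha_{n-1}+1=rn+2$ is never a non-positive integer and $z=\frac{a}{1+a}\in(0,1)$ lies in the classical domain of validity anyway, no degenerate cases arise.
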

To prove this, we need the following lemma. 
\begin{lemma} \label{lemma: mu_n_id_derivative}
Let $a > 0$ be a positive real number and $r,s$ positive integers such that $s \leq r$. Then
\begin{equation} \label{eq: mu_n_id_derivative}
J_{r,s,a}(n) = \frac{1}{n!}\frac{d^{n-1}}{dz^{n-1}} \left.\left(\frac{z^{n(r+1)}}{(1+z)^{ns}}\right)\right|_{z = a}
\end{equation}
for all $n \in \mathbb{N}$ and $n \geq 1$.
\end{lemma}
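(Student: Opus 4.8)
The plan is to recognize both sides of \eqref{eq: mu_n_id_derivative} as contour integrals encircling the point $z=a$ and to transport one into the other by a Cayley-type change of variables. For the right-hand side, since $f(z):=z^{n(r+1)}/(1+z)^{ns}$ is holomorphic in a neighbourhood of $a>0$ (its only finite singularity being at $z=-1$), Cauchy's integral formula for derivatives gives
\[
\frac{1}{n!}\frac{d^{n-1}}{dz^{n-1}}f(z)\Big|_{z=a}=\frac{1}{n}\cdot\frac{1}{2\pi i}\oint_{\gamma}\frac{w^{n(r+1)}}{(1+w)^{ns}(w-a)^{n}}\,dw,
\]
where $\gamma$ is a small positively oriented circle around $a$ excluding $0$ and $-1$. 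It therefore suffices to evaluate this integral in terms of the Jacobi polynomial.

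First I would write the Jacobi polynomial of degree $n-1$ through its Schläfli representation, obtained by inserting Cauchy's differentiation formula into the Rodrigues formula of \cite{Szego_Orth_Pol}:
\[
P_{n-1}^{(\alpha_{n-1},\beta_{n-1})}(x)=\frac{(-1)^{n-1}}{2^{n-1}}(1-x)^{-\alpha_{n-1}}(1+x)^{-\beta_{n-1}}\frac{1}{2\pi i}\oint_{\Gamma}\frac{(1-t)^{N}(1+t)^{-n(r-s)-2}}{(t-x)^{n}}\,dt,
\]
where $N=n(r+1)=(n-1)+\alpha_{n-1}$ and $-n(r-s)-2=(n-1)+\beta_{n-1}$, evaluated at $x=\frac{1-a}{1+a}$, and $\Gamma$ is a small positively oriented loop around $x$ avoiding $t=-1$ (the only singularity of the integrand, since the exponent of $1+t$ is a negative integer).

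The heart of the argument is the substitution $t=\frac{1-w}{1+w}$, the conformal map sending the $w$-circle $\gamma$ around $a$ onto the $t$-loop $\Gamma$ around $x=\frac{1-a}{1+a}$. Using the elementary relations $1-t=\frac{2w}{1+w}$, $1+t=\frac{2}{1+w}$, $t-x=\frac{-2(w-a)}{(1+w)(1+a)}$ and $dt=\frac{-2}{(1+w)^{2}}\,dw$, the $t$-integrand is transformed into a constant multiple of $\frac{w^{N}}{(1+w)^{ns}(w-a)^{n}}$, all powers of $1+w$ collapsing to $-ns$ after cancellation. Collecting the prefactors $(1-x)^{-\alpha_{n-1}}=(2a/(1+a))^{-(rn+1)}$ and $(1+x)^{-\beta_{n-1}}=(2/(1+a))^{n(r+1-s)+1}$ together with the constants produced by the substitution, the powers of $2$ cancel completely and one obtains
\[
\frac{1}{2\pi i}\oint_{\gamma}\frac{w^{N}}{(1+w)^{ns}(w-a)^{n}}\,dw=a^{rn+1}(1+a)^{-ns}\,P_{n-1}^{(\alpha_{n-1},\beta_{n-1})}\!\left(\frac{1-a}{1+a}\right).
\]
Dividing by $n$ and comparing with \eqref{eq: def_mu_n}, namely $J_{r,s,a}(n)=\frac{a}{n}\,a^{rn}(1+a)^{-sn}P_{n-1}^{(\alpha_{n-1},\beta_{n-1})}(\tfrac{1-a}{1+a})$, yields the claim.

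The main obstacle I anticipate is purely bookkeeping: correctly tracking the three kinds of factors ($a$, $1+a$, and powers of $2$) through the substitution and verifying that they cancel as asserted, together with checking that the Cayley map preserves orientation and carries $\gamma$ to $\Gamma$ without enclosing the spurious singularities at $0,-1$ (in $w$) and $\pm1,\infty$ (in $t$). A more computational alternative avoids contours altogether: expanding the $(n-1)$-th derivative by the Leibniz rule gives $\frac1n\sum_{j}\binom{N}{j}\binom{-ns}{n-1-j}a^{N-j}(1+a)^{-ns-(n-1-j)}$, and matching this against the explicit hypergeometric expansion of $P_{n-1}^{(\alpha_{n-1},\beta_{n-1})}$ would reduce the lemma to a Chu--Vandermonde-type binomial identity; I regard the contour route as the cleaner of the two.
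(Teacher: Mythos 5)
Your argument is correct, and it takes a genuinely different route from the paper's proof. The paper proceeds purely algebraically: it expands the $(n-1)$-st derivative by Leibniz' rule, factors out $z^{nr+1}(1+z)^{-ns}$, and matches the remaining polynomial in $z/(1+z)$ \emph{term by term} against Szeg\H{o}'s explicit sum representation of $P_{n-1}^{(\alpha_{n-1},\beta_{n-1})}(1-2x)$, using $(-(n+1)s-k+1)_k=(-1)^k\left((n+1)s\right)_k$; so, contrary to the closing remark of your proposal, the Leibniz alternative requires no Chu--Vandermonde-type summation --- direct coefficient matching suffices, and that is exactly what the paper does. Your contour route checks out: the Cayley substitution produces the constant $(-1)^{n+1}2^{ns-1}(1+a)^{n}$, which against the Schl\"afli prefactor $(-1)^{n-1}2^{-(n-1)}$ and $(1-x)^{-\alpha_{n-1}}=\left(2a/(1+a)\right)^{-(rn+1)}$, $(1+x)^{-\beta_{n-1}}=\left(2/(1+a)\right)^{n(r+1-s)+1}$ cancels all signs and all powers of $2$ and leaves precisely $a^{rn+1}(1+a)^{-ns}$, as you assert; the orientation and singularity checks also go through, since $w=-1$ and $w=0$ lie outside $\gamma$, so their images $t=\infty$ and $t=1$, as well as $t=-1$ (the image of $w=\infty$), lie outside $\Gamma$. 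The one point you should make explicit is that Rodrigues' formula is invoked at the nonclassical parameter $\beta_{n-1}=-(r+1-s)n-1$, a negative integer outside the range $\alpha,\beta>-1$ in which it is usually stated; this is harmless because for fixed $x\in(-1,1)$ both sides of the Rodrigues identity are polynomials in $(\alpha,\beta)$, and here the exponents $n(r+1)$ and $-n(r-s)-2$ are integers so the Schl\"afli integrand is single-valued, but a one-line justification belongs in the proof. As to what each approach buys: the paper's proof is shorter and entirely elementary, while yours is more conceptual --- it exhibits the Jacobi polynomial with these varying parameters as the Cauchy coefficient of $z^{n(r+1)}(1+z)^{-ns}$ transported by the Cayley map, which dovetails naturally with the Lagrange--B\"urmann expansion of $w(x)$ at infinity used later in the proof of Theorem \ref{thm: measure_mu}.
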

\begin{proof}
This follows from Leibniz' rule. Indeed, we know that
\begin{align*}
\frac{d^{n-1}}{dz^{n-1}} \left(\frac{z^{n(r+1)}}{(1+z)^{ns}}\right) &= \sum_{k = 0}^{n-1} {n-1 \choose k} \frac{d^{n-1-k}}{dz^{n-1-k}}\left(z^{n(r+1)}\right) \frac{d^k}{dz^k}\left((1+z)^{-ns}\right) \\
&= \sum_{k = 0}^{n-1} {n-1 \choose k} (nr + k + 2)_{n-1-k} z^{nr +k + 1} (-ns - k +1)_k (1+z)^{-ns-k} \\
&= \frac{z^{nr + 1}}{(1+z)^{ns}} P_{n-1}\left(\frac{z}{1+z}\right)
\end{align*}
where \((a)_k\) denotes the Pochhammer symbol and
\begin{equation*}
P_n(x) := \sum_{k = 0}^n {n \choose k} ((n+1)r + k + 2)_{n-k} (-(n+1)s - k +1)_k x^k.
\end{equation*}
Using the representation (see, e.g., \cite{Szego_Orth_Pol}, p.62) 
\[P_n^{(\alpha_n, \beta_n)}(z)=\frac{1}{n!} \sum_{k=0}^n\binom{n}{k} (n+\alpha+\beta+1)_k (\alpha+k+1)_{n-k} \left(\frac{z-1}{2}\right)^k,\]
it is now straightforward to check that
\begin{equation*}
P_n(x) = n! P_n^{(\alpha_n, \beta_n)}(1-2x)
\end{equation*}
with $\alpha_n = rn + r + 1$ and $\beta_n = -(r+1 - s)n - (r+2 - s)$. 
\end{proof}

\begin{proof}[Proof of Proposition \ref{prop: positivity_mu_n}]
This is clearly true for $n = 0$, so let $n \in \mathbb{N}$ and $n \geq 1$. We claim that
\begin{equation*}
\frac{d^{k}}{dz^{k}} \left.\left(\frac{z^{n(r+1)}}{(1+z)^{ns}}\right)\right|_{z = x} > 0, \quad k = 0, \ldots, n, \quad x > 0
\end{equation*}
from which the statement then immediately follows by using Lemma \ref{lemma: mu_n_id_derivative}. A simple argument using Leibniz' rule shows that it suffices to prove this claim for $r = s$. We start with the identity
\begin{equation*}
\frac{1}{(1+z)^{nr}} = \frac{1}{\Gamma(rn)} \int_{0}^{\infty} e^{-(1+z)t} t^{rn - 1} dt, \quad z>0.
\end{equation*}
Hence, after the substitution $y = tz$, we obtain
\begin{align*}
\frac{d^{k}}{dz^{k}} \left(\frac{z^{n(r+1)}}{(1+z)^{nr}}\right) &= \frac{d^k}{dz^k} \left(\frac{z^n}{\Gamma(rn)} \int_0^{\infty} e^{-y(1 + \frac{1}{z})} y^{rn-1} dy\right) \\
&= \frac{1}{\Gamma(rn)} \int_0^{\infty} \frac{d^{k}}{dz^{k}} \left(z^n e^{-\frac{y}{z}}\right) e^{-y} y^{rn-1} dy \\
&= \frac{1}{\Gamma(rn)} \int_0^{\infty} \frac{d^{k}}{du^{k}} \left.\left(u^n e^{-\frac{1}{u}}\right)\right|_{u = \frac{z}{y}} y^{n-k} e^{-y} y^{rn-1} dy.
\end{align*}
Using \cite[Ex. 73 p. 388]{Szego_Orth_Pol} and the sum representation for Laguerre polynomials (see e.g. \cite[Formula 5.1.6]{Szego_Orth_Pol}) 
\begin{equation*}
L_k^{\alpha}(x) = \sum_{j = 0}^k (-1)^j {k + \alpha \choose k-j} \frac{x^j}{j!},
\end{equation*}
this can be rewritten as
\begin{equation*}
\frac{k!}{\Gamma(rn)} \int_0^{\infty} e^{-y(1 + \frac{1}{z})} y^{rn-1} z^{n-k} \left(\sum_{j = 0}^k {k-n-1 \choose k-j} \frac{(-1)^{k-j}}{j!} \left(\frac{y}{z}\right)^j\right) dy.
\end{equation*}
Since the sign of ${k-n-1 \choose k-j}$ is given by \((-1)^{k-j}\) the claim follows. 
\end{proof}

We are now ready to state our main result of this section. First, we define
\begin{equation} \label{eq: w_ast}
w^{\ast} := \frac{a(r + 1 - s) - r + \sqrt{(a(r + 1 - s) - r)^2 + 4a(r+1)(r-s)}}{2(r-s)}
\end{equation}
and
\begin{equation} \label{eq: x_ast}
x^{\ast} := \frac{r+1}{s+1} \frac{(w^{\ast})^r}{(w^{\ast} + 1)^{s-1}\left(w^{\ast} - \frac{as - 1}{s+1}\right)}.
\end{equation}
These quantities are derived in Proposition \ref{prop: study_alg_eq}.
\begin{theorem} \label{thm: measure_mu}
Let $r,s \in \mathbb{N}$ such that $s < r$ and let $a$ be a positive real number. Then there exists a unique measure $J_{r,s,a}$ on $[0, x^{\ast}]$ with total mass $a$ such that the moments are given by the numbers \eqref{eq: def_mu_0} and \eqref{eq: def_mu_n}. 
\end{theorem}
\begin{proof}
First, notice that $x^{\ast} > 0$ for all $a > 0$ and $s < r$. Indeed, one can easily check that $w^{\ast} > a$ and thus, since $\frac{as-1}{s+1} < a$, we have that $x^{\ast} > 0$. 

Consider now the algebraic equation
\begin{equation} \label{eq: algebraic_eq_w}
w^{r+1} - x(w - a)(w+1)^s = 0.
\end{equation}
As we will show in Proposition \ref{prop: study_alg_eq} this algebraic equation has a unique solution $w(x)$ which is analytic at infinity such that $w(x) \to a$ as $x \to \infty$. Furthermore, this solution has an analytic continuation to $\mathbb{C}\setminus[0, x^{\ast}]$. Since $w(x)$ is a solution of \eqref{eq: algebraic_eq_w} we know that
\begin{equation*}
w(x) = a + \frac{1}{x} \frac{w(x)^{r+1}}{(w(x)+1)^s}
\end{equation*}
and now applying the Lagrange-B\"urmann theorem gives us that
\begin{equation*}
w(x) = a + \sum_{n = 1}^{\infty} \frac{1}{n!} \frac{d^{n-1}}{dz^{n-1}} \left.\left(\frac{z^{n(r+1)}}{(1+z)^{ns}}\right)\right|_{z = a} x^{-n}
\end{equation*}
in a neighbourhood of infinity. Because of Lemma \ref{lemma: mu_n_id_derivative} this can be rewritten as
\begin{equation} \label{eq: w_moments_series}
w(x) = \sum_{n = 0}^{\infty} J_{r,s,a}(n) x^{-n}.
\end{equation}
We define now
\begin{equation} \label{eq: def_density}
\rho(x) := \frac{1}{2\pi i} \left\lbrace\frac{w_{-}(x)}{x} - \frac{w_+(x)}{x}\right\rbrace, \qquad x \in (0, x^{\ast})
\end{equation}
where $w_-(x)$, resp. $w_+(x)$, denotes the limiting value of $w(z)$ as $z$ approaches $x$ with $\text{Im}(z) < 0$, resp. $\text{Im}(z) > 0$. First of all, we notice that $w_-(x) = \overline{w_+(x)}$ so that $\rho(x)$ is a real-valued function. Furthermore, we claim that $\rho(x)$ is an integrable, everywhere positive function such that
\begin{equation*}
\int_0^{x^{\ast}} x^n \rho(x) dx = J_{r,s,a}(n) \qquad \text{for all } n \in \mathbb{N}
\end{equation*}
and thus is the density of a (unique) measure on $[0, x^{\ast}]$ with the numbers $J_{r,s,a}(n)$ as moments. 

By standard arguments and using $w(0) = 0$ one can see that
\begin{equation*}
\int_0^{x^{\ast}} x^n \rho(x) dx = \oint_{K} z^n \frac{w(z)}{z} dz = \oint_{K} z^{n-1} w(z) dz
\end{equation*}
with $K$ a positively oriented, closed contour encircling the cut $[0, x^{\ast}]$. Since $z^{n-1} w(z)$ has no singularities in $\mathbb{C}\setminus [0, x^{\ast}]$, we can can compute the residue at infinity and use \eqref{eq: w_moments_series} to obtain that
\begin{equation*}
\oint_{K} z^{n-1} w(z) dz = J_{r,s,a}(n)
\end{equation*}
and hence
\begin{equation*}
\int_0^{x^{\ast}} x^n \rho(x) dx = J_{r,s,a}(n).
\end{equation*}
Finally, to prove that $\rho(x) > 0$ for all $x \in (0, x^{\ast})$, it now suffices to show that $w_-(x) \neq w_+(x)$ if $x \in (0, x^{\ast})$. Indeed, if $w_-(x) \neq w_+(x)$, then $\rho(x) \neq 0$ and thus, because of the continuity, $\rho(x)$ is either everywhere positive or everywhere negative. Since
\begin{equation*}
\int_0^{x^{\ast}} \rho(x) dx = J_{r,s,a}(0) = a
\end{equation*}
we obtain that $\rho(x)$ must be positive-valued. 

So let $x \in (0, x^{\ast})$ and let $K_x$ be a positively oriented circle around the origin with radius $x$ starting at the point $x$. Then we have
\begin{equation*}
1 = \frac{1}{2\pi i} \oint_{K_x} \frac{1}{z} dz = \frac{1}{2\pi i} \oint_{K_x} \frac{1}{f(w(z))} dz
\end{equation*}
where we used the fact that
\begin{equation*}
z = \frac{w(z)^{r+1}}{(w(z) - a)(w(z)+1)^s} =: f(w(z)).
\end{equation*}
Making $u = w(z)$ the new variable of integration and observing that $1 = f'(w(z)) w'(z)$ and
\begin{equation*}
f'(w) = \left(\frac{r+1}{w} - \frac{1}{w-a} - \frac{s}{w+1}\right)f(w)
\end{equation*}
we get that
\begin{equation*}
1 = \frac{1}{2\pi i} \oint_{w(K_x)} \left(\frac{r+1}{u} - \frac{1}{u-a} - \frac{s}{u+1}\right) du.
\end{equation*}
Here $w(K_x)$ is a contour starting at $w_+(x)$ and ending at $w_-(x)$. 
Assume now that $w_-(x) = w_+(x)$, i.e. $w_-(x) = w_+(x)$ is a real number. Because \eqref{eq: algebraic_eq_w} cannot have nonnegative solutions for $w$ if $x \in (0, x^{\ast})$ we see that $w(K_x)$ will be a closed contour starting in a point on the negative axis, going to the complex plane, crossing the real axis exactly one more time between the origin and $a$ and returning to its starting point. The orientation can be positive or negative, and the contour can encircle $-1$ but not $a$. Thus we get the following possibilities
\begin{equation*}
\frac{1}{2\pi i} \oint_{w(K_x)} \left(\frac{r+1}{u} - \frac{1}{u-a} - \frac{s}{u+1}\right) du = \begin{cases}r+1 \\ r+1-s \\ -(r+1) \\ -(r+1-s)\end{cases} \neq 1.
\end{equation*}
We see that in all cases we get a contradiction and thus $w_-(x) \neq w_+(x)$. Hence we can conclude that \eqref{eq: def_density} indeed defines a density.
\end{proof}

We end this section with the following proposition, which was needed in our proof of Theorem \ref{thm: measure_mu}. 
\begin{proposition} \label{prop: study_alg_eq}
Let $r, s$ be positive integers such that $s < r$ and let $a > 0$. Then the equation
\begin{equation} \label{eq: algebraic_eq_w_2}
w^{r+1} - x(w - a)(w+1)^s = 0
\end{equation}
defines an algebraic function $w(x)$ which has an analytic branch at infinity with $w(x) \to a$, as $x \to \infty$. Moreover, this branch admits an analytic continuation to $\mathbb{C} \setminus [0, x^{\ast}]$ where $x^{\ast}$ is given by \eqref{eq: x_ast}.
\end{proposition}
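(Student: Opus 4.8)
The plan is to regard \eqref{eq: algebraic_eq_w_2} as an inversion problem for the rational function
\[
f(w) := \frac{w^{r+1}}{(w-a)(w+1)^s},
\]
so that the equation reads $x = f(w)$ and $w(x)$ is a branch of $f^{-1}$. Since $s < r$ and $a>0$, numerator and denominator share no common factor, so $f \colon \widehat{\mathbb{C}} \to \widehat{\mathbb{C}}$ is a rational map of degree $r+1$. The point $w=a$ is a \emph{simple} pole of $f$, hence $f$ is locally biholomorphic from a neighbourhood of $a$ onto a neighbourhood of $\infty$; inverting gives an analytic branch $w(x)$ near $x=\infty$ with $w(x)\to a$. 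This settles the first assertion.

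First I would locate the ramification. Logarithmic differentiation gives $f'(w)/f(w) = \tfrac{r+1}{w} - \tfrac{1}{w-a} - \tfrac{s}{w+1}$, so the critical points away from $0,a,-1$ are the roots of
\[
(r-s)w^2 - \bigl(a(r+1-s) - r\bigr)w - (r+1)a = 0 .
\]
Because $r>s$ the quadratic has one positive and one negative root; evaluating its left side at $w=a$ and $w=-1$ yields $-a(a+1)<0$ and $-s(1+a)<0$, so both $a$ and $-1$ lie between the roots. Hence the larger root is $w^{\ast}>a$, namely \eqref{eq: w_ast}, while the smaller is $w_2<-1$. Substituting the quadratic relation into $f(w^{\ast})$ collapses it to the closed form \eqref{eq: x_ast}, giving $x^{\ast}=f(w^{\ast})>0$. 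A Riemann--Hurwitz count then confirms that the full branch locus consists of $w=0$ (a zero of order $r+1$, with $f(0)=0$ and all sheets meeting), the poles $w=-1$ and $w=\infty$ lying over $x=\infty$ (of orders $s$ and $r-s$), and the two simple critical points $w^{\ast},w_2$; the ramification indices sum to $r+(s-1)+(r-s-1)+2 = 2r = 2(r+1)-2$, as required.

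The core of the argument is to continue the branch $w(\infty)=a$ to all of $D := \mathbb{C}\setminus[0,x^{\ast}]$, which on the sphere is a disk and hence simply connected. I would take $\Omega_a$ to be the connected component containing $a$ of $\widehat{\mathbb{C}}\setminus f^{-1}([0,x^{\ast}])$ and prove that $f$ restricts to a conformal bijection $f\colon \Omega_a \to D$; its inverse is then the desired continuation, agreeing with the local branch at infinity by uniqueness. Since $[0,x^{\ast}]$ contains the critical values $0$ and $x^{\ast}=f(w^{\ast})$, the preimage $f^{-1}([0,x^{\ast}])$ is a graph in the $w$-sphere carrying the walls between the sheets, and $f$ maps each complementary face properly onto the simply connected $D$. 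The decisive point is that $\Omega_a$ contains \emph{no} critical point of $f$: then $f|_{\Omega_a}$ is an unramified proper map onto simply connected $D$, hence a degree-one covering, i.e. a biholomorphism.

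The main obstacle is precisely this separation statement. One must verify that $\Omega_a$ contains $a$ but none of the remaining preimages of critical values: not the other two points $w=-1,\infty$ lying over $x=\infty$, and not $w=0$, $w=w^{\ast}$, or $w=w_2$. Equivalently, the branch must avoid $w_2$, so that the finite critical value $x_2=f(w_2)$ does not obstruct it, and must not absorb the ramification over $x=\infty$ carried by $-1$ and $\infty$. I would establish this by analysing the trajectory structure of $f^{-1}([0,x^{\ast}])$: tracking the signs and monotonicity of $f$ along the real axis on the intervals cut out by $-\infty, w_2, -1, 0, a, w^{\ast}, \infty$ locates the real arcs of the preimage together with the $r+1$ arcs emanating from $0$, and thereby identifies $\Omega_a$ as the unique face bordered only by arcs over $[0,x^{\ast}]$ that encloses neither $-1,\infty$ nor $w_2$. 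Alternatively, a winding-number computation of the type used in the proof of Theorem \ref{thm: measure_mu} shows that a generic $x\in D$ has exactly one $f$-preimage in $\Omega_a$, confirming $\deg(f|_{\Omega_a})=1$.
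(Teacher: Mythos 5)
Your executed computations are all correct, and your architecture is genuinely different from the paper's. The paper obtains the branch at infinity from Lagrange--B\"urmann, derives the same quadratic you get from $f'/f$ (via the system \eqref{eq: solve_system}; the two quadratics agree up to sign), and then argues by branch-point bookkeeping: all candidate branch points $0$, $x^{\ast}$, $\tilde{x}$, $\infty$ are \emph{real}, the first-sheet branch continues along $\mathbb{R}\setminus[0,x^{\ast}]$ with $w_1(x)>0$ there, and since the $w$-value at the critical point over $\tilde{x}$ is $\tilde{w}<0$, the point $\tilde{x}$ cannot branch the first sheet; gluing the two half-plane continuations finishes. You instead aim for the stronger ``uniformization'' statement that $f$ maps a face $\Omega_a$ of $\widehat{\mathbb{C}}\setminus f^{-1}([0,x^{\ast}])$ biholomorphically onto the slit sphere. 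Your root location $\tilde{w}=w_2<-1<a<w^{\ast}$ is right (and sharper than the paper's $\tilde{w}<0$), the derivation of \eqref{eq: x_ast} from the critical-point relation is right, the Riemann--Hurwitz tally $r+(s-1)+(r-s-1)+2=2r$ correctly certifies that the critical-point list is complete, and the covering-space logic (proper and unramified onto a simply connected target $\Rightarrow$ degree one) is sound.

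The gap is exactly where you place ``the main obstacle'': the separation statement $-1,\infty,w_2\notin\Omega_a$ is announced (``I would establish this by analysing the trajectory structure\dots'') but never carried out, and that is where all the content of the proposition sits. Your sketched method does close it, but it must actually be run: since $w^{\ast}$ is the only critical point of $f$ on $(0,\infty)$, one has $f<0$ on $(0,a)$ and $f>x^{\ast}$ on $(a,\infty)\setminus\{w^{\ast}\}$, so $(0,w^{\ast})\subset\Omega_a$ and the graph meets $[0,\infty)$ only at $\{0,w^{\ast}\}$; the two conjugate edges leaving the double point $w^{\ast}$ map homeomorphically onto $(0,x^{\ast})$ and hence terminate at the unique preimage $w=0$ of $0$ --- \emph{provided} their interiors contain no vertex, i.e.\ provided $\tilde{x}=f(w_2)\notin(0,x^{\ast})$. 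This makes $\partial\Omega_a$ a Jordan curve meeting $\mathbb{R}$ only at $0$ and $w^{\ast}$, so $\Omega_a\cap\mathbb{R}=(0,w^{\ast})$ excludes $-1$ and $w_2$, and boundedness excludes $\infty$. Note the proviso: your face argument genuinely needs to locate $\tilde{x}$ relative to $(0,x^{\ast})$ (for $r-s$ odd, $\tilde{x}<0$ is immediate; for $r-s$ even, $\tilde{x}>0$ and you must either show $\tilde{x}>x^{\ast}$ or redo the boundary analysis when $w_2$ is a vertex of the graph), an input missing even from your plan. The paper's mechanism sidesteps this entirely: because all branch points are real and $w_1>0$ on $\mathbb{R}\setminus[0,x^{\ast}]$ while $\tilde{w}<0$, the point $\tilde{x}$ cannot affect the first sheet wherever it sits. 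So: right skeleton, genuinely different route that would deliver a stronger conclusion, but as submitted the argument stops at a plan precisely at the crux.
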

\begin{proof}
The existence of a solution which is analytic at infinity can be seen by rewriting \eqref{eq: algebraic_eq_w_2} as
\begin{equation*}
w = a + \frac{1}{x} \frac{w^{r+1}}{(w+1)^s},
\end{equation*}
which permits one to apply Lagrange-B\"urmann's theorem. We confine this solution to the first sheet of the Riemann surface associated to \eqref{eq: algebraic_eq_w_2} and we denote it by $w_1(x)$. It remains to prove that this solution has an analytic continuation to $\mathbb{C} \setminus [0, x^{\ast}]$. To this end, we want to find all the branch points and thus we have to solve the following system of equations in the variables $x$ and $w$
\begin{equation} \label{eq: solve_system}
\begin{cases}w^{r+1} - x(w-a)(w+1)^s = 0 \\(r+1)w^r - x(w+1)^{s-1}((s+1)w - (as-1)) = 0\end{cases}.
\end{equation}
One can now immediately see that $x = 0$ is a branch point with $w = 0$. This is a multiple branch point connecting all the $r+1$ sheets of the associated Riemann surface. Moreover, in the case $a = \frac{1}{s}$, the second equation of \eqref{eq: solve_system} gives us that $w = 0$ and thus $x = 0$. So from now on we assume that $a \neq \frac{1}{s}$. Then the second equation can be rewritten as
\begin{equation*}
x = \frac{r+1}{s+1} \frac{w^r}{(w+1)^{s-1} \left(w - \frac{as-1}{s+1}\right)}.
\end{equation*}
Substituting this in the first equation gives us
\begin{equation*}
w^{r+1} - \frac{r+1}{s+1} \frac{w^r}{(w+1)^{s-1} \left(w - \frac{as-1}{s+1}\right)} (w-a)(w+1)^s = 0.
\end{equation*}
Assuming that $w \neq 0$, this can be simplified to
\begin{equation*}
w((s+1)w - (as-1)) - (r+1)(w-a)(w+1) = 0. 
\end{equation*}
One can now easily check that the two solutions of this quadratic equation are given by  $w = w^{\ast}$ where $w^{\star}$ is defined in \eqref{eq: w_ast} and by $w = \tilde{w}$ with
\begin{equation} \label{eq: w_tilde}
\tilde{w} := \frac{a(r + 1 - s) - r - \sqrt{(a(r + 1 - s) - r)^2 + 4a(r+1)(r-s)}}{2(r-s)}.
\end{equation}
Hence, we can conclude that the only possible branch points are at the real points $x = 0$, $x = x^{\ast}$, $x = \tilde{x}$ with
\begin{equation} \label{eq: x_tilde}
\tilde{x} := \frac{r+1}{s+1} \frac{(\tilde{w})^r}{(\tilde{w} + 1)^{s-1}\left(\tilde{w} - \frac{as - 1}{s+1}\right)}
\end{equation}
and at infinity. 

To conclude that on the first sheet $w(x)$ only has branch points at $x = 0$ and at $x = x^{\ast}$, it now suffices to show that there is no branch point at $x = \tilde{x}$ on the first sheet. Indeed, due to the analyticity of $w_1(x)$ there cannot be a branch point at infinity on this sheet. From equation \eqref{eq: algebraic_eq_w_2} it can be observed that $w_1(x)$ admits an analytic continuation starting at infinity travelling along the negative real axis up to the origin. In the same manner, $w_1(x)$ can be analytically continued starting at infinity travelling along the positive real axis up to $x = x^{\ast}$. Moreover, we have $w_1(x) > 0$ on $\mathbb{R}\setminus [0, x^{\ast}]$. As all branch points are real, we can conclude that $w_1(x)$ admits an analytic continuation onto $\mathbb{C}\setminus [0, x^{\ast}]$. Taking into account that $\tilde{x} \notin [0, x^{\ast}]$ and $w(\tilde{x}) = \tilde{w} < 0$, this shows that there can be no further branch point on the first sheet which completes the proof.
\end{proof}

\begin{remark}
Using the observations made in the proof of Proposition \ref{prop: study_alg_eq} we can now describe the geometry of the Riemann surface associated to the algebraic equation \eqref{eq: algebraic_eq_w_2}. We illustrate this in Figure \ref{fig: riemann_surface} for the case $r = 5, s=3$.
\end{remark}

\begin{figure}[h!] 
\centering
\begin{tikzpicture}
\draw (0,0)--(1,1)--(4,1)--(3,0)--cycle;															\draw (8,0)--(9,1)--(12,1)--(11,0)--cycle;
\draw [red] (2, 0.5)--(2.8,0.5);																		\draw [Orange] (8.5,0.5)--(10,0.5);							\draw [Orange, dashed] (10,0.5)--(10,-3.5); \draw [Orange, dashed] (8.5,0.5)--(8.5,-3.5);
\draw [red, dashed] (2,0.5)--(2,-1.5);													\draw [Orange, dashed] (8.5,0.5)--(5.5,-1.5);
\draw [red, dashed] (2.8,0.5)--(2.8,-1.5);									\draw [Orange, dashed] (10,0.5)--(7,-1.5);
\draw [red] (2, -1.5)--(2.8,-1.5); 											\draw [Orange] (5.5,-1.5)--(7,-1.5);
\draw (0,-2)--(1,-1)--(4,-1)--(3,-2)--cycle; 							\draw (5,-2)--(6,-1)--(9,-1)--(8,-2)--cycle;
\draw [ForestGreen] (0.5,-1.5)--(2,-1.5);							\draw [Blue] (7,-1.5)--(8,-1.5);
\draw [ForestGreen, dashed] (0.5,-1.5)--(0.5,-3.5);			\draw [Blue, dashed] (7,-1.5)--(2,-3.5);
\draw [ForestGreen, dashed] (2,-1.5)--(2,-3.5);				\draw [Blue, dashed] (8,-1.5)--(3,-3.5);
\draw [ForestGreen] (0.5,-3.5)--(2,-3.5);							\draw [Blue] (2,-3.5)--(3,-3.5);		
																										\draw [Orange, dashed] (8.5,-3.5)--(5.5,-1.5);
																												\draw [Orange, dashed] (10,-3.5)--(7,-1.5);	
																														\draw [Orange] (8.5,-3.5)--(10,-3.5);
\draw (0,-4)--(1,-3)--(4,-3)--(3,-4)--cycle;													\draw (8,-4)--(9,-3)--(12,-3)--(11,-4)--cycle;

\node [above right, red] at (2.8,0.5) {$x^{\ast}$};
\fill[red] (2.8,0.5) circle (2pt);
\node [above left] at (2,0.5) {$0$};
\fill (2,0.5) circle (2pt);
\node [above right, red] at (2.8,-1.5) {$x^{\ast}$};
\fill[red] (2.8,-1.5) circle (2pt);
\fill (2,-1.5) circle (2pt);
\fill (2,-3.5) circle (2pt);
\node [above right, Blue] at (3,-3.5) {$\tilde{x}$};
\fill[Blue] (3,-3.5) circle (2pt);
\node [above right, Blue] at (8,-1.5) {$\tilde{x}$};
\fill[Blue] (8,-1.5) circle (2pt);
\fill (7,-1.5) circle (2pt);
\fill (10,0.5) circle (2pt);
\fill (10,-3.5) circle (2pt);
\end{tikzpicture}
\caption{The geometry of the Riemann surface associated to \eqref{eq: algebraic_eq_w_2} for $r=5, s=3$.}
\label{fig: riemann_surface}
\end{figure}
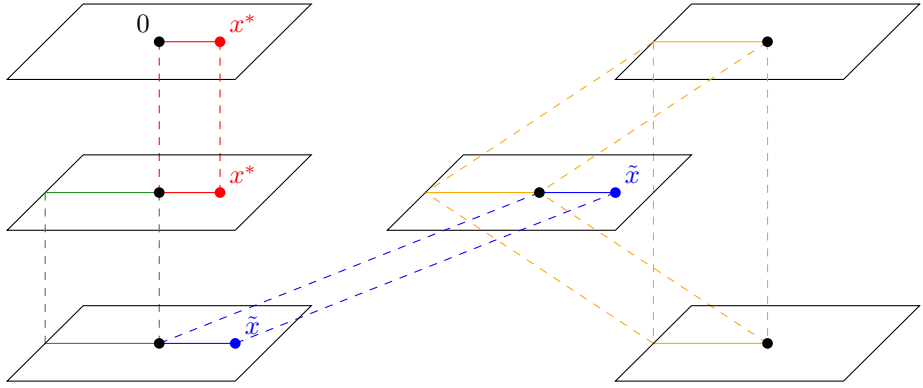

\section{Application to random matrix theory} \label{sec: appl_rmt}

In this section we will show how the measures obtained in Theorem \ref{thm: measure_mu} arise naturally in random matrix theory and free probability. We start with a small introduction in free probability theory which we need to state our second theorem. For more details, we refer the reader to \cite{Voiculescu_DN}, \cite{Anderson_GZ}, \cite{Speicher} or \cite{Akemann_Handbook}. 

\subsection{Free probability and random matrices}

Given a (compactly supported) probability measure $\mu$ on $\mathbb{R}$ such that $\int_{\mathbb{R}}x d\mu(x) \neq 0$ we define its $S$-transform as follows. Let $G_{\mu}$ denote the Stieltjes transform of the measure $\mu$, i.e.
\begin{equation*} \label{eq: def_stieltjes}
G_{\mu}(z) := \int_{\mathbb{R}} \frac{1}{z-x} d\mu(x), \qquad z \in \mathbb{C}\setminus\text{supp}(\mu)
\end{equation*}
and define 
\begin{equation} \label{eq: def_psi_mu}
\psi_{\mu}(z) := \frac{1}{z} G_{\mu}\left(\frac{1}{z}\right) - 1.
\end{equation}
Let $\chi_{\mu}$ be the unique function, analytic in a neighbourhood of zero, satisfying
\begin{equation} \label{eq: def_chi_mu}
\chi_{\mu}(\psi_{\mu}(z)) = z.
\end{equation}
Then the $S$-transform, denoted by $S_{\mu}$, is defined as
\begin{equation} \label{eq: def_s_tr}
S_{\mu}(z) := \frac{z+1}{z} \chi_{\mu}(z).
\end{equation}
Given two (compactly supported) probability measures $\mu$ and $\nu$ with non-vanishing first moments, the free multiplicative convolution, denoted by $\mu \boxtimes \nu$, is the unique (compactly supported) probability measure that satisfies the identity
\begin{equation}
S_{\mu \boxtimes \nu}(z) = S_{\mu}(z) S_{\nu}(z).
\end{equation}
Notice that this identity shows us that the free multiplicative convolution is commutative, i.e. $\mu \boxtimes \nu = \nu \boxtimes \mu$. The $S$-transform is an important tool in free probability theory to compute the distribution of, for instance, the product of free random variables.

By $\mu_A$ we denote the empirical eigenvalue distribution of an $n \times n$ random matrix $A$, i.e.
\begin{equation} 
\mu_A = \frac{1}{n} \sum_{i = 1}^n \delta_{\lambda_i(A)}
\end{equation}
with $\lambda_i(A)$ the $n$ random eigenvalues of $A$. 

With this in mind, we now have the following result (\cite{Couillet_Debbah}, Theorem 4.7, p. 82, see also \cite{Voiculescu} and \cite{Akemann_Handbook}).

\begin{theorem} \label{thm: distr_prod_matrices}
Let $\{A_n\}$ and $\{B_n\}$ be two sequences of random matrices of size $n \times n$ such that $A_n > 0$, i.e. all eigenvalues are positive (with probability 1), and such that $A_n$ and $B_n$ are asymptotically free almost surely for all $n$. Moreover, suppose that there exist two compactly supported probability measures $\mu_1$ and $\mu_2$ such that
\begin{equation}
\mu_{A_n} \stackrel{w}{\longrightarrow}  \mu_1 \qquad a.s. \qquad \text{and} \qquad \mu_{B_n} \stackrel{w}{\longrightarrow}  \mu_2 \qquad a.s.,
\end{equation} 
as $n \to \infty$, and where $\mu_{A_n}$ resp. $\mu_{B_n}$ denote the empirical eigenvalue distribution of $A_n$ resp. $B_n$. Then 
\begin{equation}
\mu_{A_nB_n} \stackrel{w}{\longrightarrow} \mu_1 \boxtimes \mu_2 \qquad a.s.,
\end{equation}
as $n \to \infty$.
\end{theorem}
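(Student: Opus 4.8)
The plan is to prove the theorem by the method of moments, using asymptotic freeness to identify the limiting mixed moments and Voiculescu's multiplicativity theorem for the $S$-transform to recognize the limit as $\mu_1 \boxtimes \mu_2$. Throughout I will take $B_n$ to be self-adjoint, as it is in the intended application (where $B_n$ is a positive Wishart-type matrix), so that all relevant spectra are real. First I would reduce to a self-adjoint product: since $A_n > 0$ the Hermitian square root $A_n^{1/2}$ exists, and $A_n B_n$ is conjugate to the self-adjoint matrix $C_n := A_n^{1/2} B_n A_n^{1/2}$, which has the same eigenvalues. By cyclicity of the trace, $\frac1n\Tr\big((A_n B_n)^k\big) = \frac1n\Tr\big(C_n^k\big)$ for every $k$, so $\mu_{A_n B_n}$ and the genuine real measure $\mu_{C_n}$ share all moments. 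Since $\mu_1 \boxtimes \mu_2$ is compactly supported (hence moment-determinate), the Fr\'echet--Shohat theorem reduces the claim to showing that, almost surely, $\int x^k\, d\mu_{C_n}(x) \to \int x^k\, d(\mu_1 \boxtimes \mu_2)(x)$ for each fixed $k$.

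Next I would write $\tau_n := \frac1n\Tr$ and expand $\tau_n(C_n^k) = \tau_n\big((A_n B_n)^k\big)$ as a normalized-trace moment of an alternating word in $A_n$ and $B_n$. The hypotheses---that $A_n$ and $B_n$ are asymptotically free almost surely and that $\mu_{A_n} \to \mu_1$, $\mu_{B_n} \to \mu_2$ almost surely---mean precisely that the pair $(A_n, B_n)$ converges almost surely in $*$-distribution to a pair $(a,b)$ of free elements in a tracial $W^*$-probability space $(\mathcal{A}, \tau)$, with $a$ distributed according to $\mu_1$ and $b$ according to $\mu_2$. Hence $\tau_n\big((A_n B_n)^k\big) \to \tau\big((ab)^k\big)$ almost surely for every $k$, where for each $k$ one discards a null event and takes the (still null) countable union so that all moments converge on a single almost-sure event. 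By freeness, $\tau\big((ab)^k\big)$ is a fixed universal polynomial in the moments of $\mu_1$ and $\mu_2$, so the limit is deterministic.

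It then remains to identify $\tau\big((ab)^k\big)$ with the $k$-th moment of $\mu_1 \boxtimes \mu_2$. Let $\nu$ denote the distribution of the self-adjoint element $a^{1/2} b a^{1/2}$. By Voiculescu's theorem that the $S$-transform linearizes free multiplication (for a positive element multiplied by a free one), $S_\nu = S_{\mu_1} S_{\mu_2}$; since the paper defines $\mu_1 \boxtimes \mu_2$ as the unique measure whose $S$-transform is $S_{\mu_1} S_{\mu_2}$, uniqueness forces $\nu = \mu_1 \boxtimes \mu_2$. Traciality then gives $\tau\big((ab)^k\big) = \tau\big((a^{1/2} b a^{1/2})^k\big) = \int x^k\, d\nu(x)$, which is exactly the $k$-th moment of $\mu_1 \boxtimes \mu_2$. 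Chaining the three steps yields the asserted almost-sure weak convergence $\mu_{A_n B_n} \to \mu_1 \boxtimes \mu_2$.

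The main obstacle I anticipate is the careful formalization of ``asymptotically free almost surely'' and its combination with the marginal convergences: packaging them into a single almost-sure convergence in $*$-distribution to a free limiting pair, and controlling the countable family of exceptional null sets. The only genuinely deep input is Voiculescu's $S$-transform multiplicativity, which I would cite as a known result rather than reprove; the remaining ingredients---traciality, the reduction to $C_n$, and moment-determinacy of the compactly supported limit---are routine.
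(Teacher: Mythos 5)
The paper does not prove this theorem at all: it is imported verbatim as a known result, with the citation ``(\cite{Couillet_Debbah}, Theorem 4.7, p.~82, see also \cite{Voiculescu} and \cite{Akemann_Handbook})'', so there is no internal proof to compare your argument against. What you have written is a reconstruction of the standard proof from the literature, and its architecture is sound: the similarity $A_nB_n \sim A_n^{1/2}B_nA_n^{1/2}$ (valid since $A_n>0$), the method of moments with Fr\'echet--Shohat (legitimate because $\mu_1\boxtimes\mu_2$ is compactly supported, hence determinate), the identification of $\lim_n \frac{1}{n}\Tr\bigl((A_nB_n)^k\bigr)$ with $\tau\bigl((ab)^k\bigr)$ for a free pair, and the final appeal to Voiculescu's $S$-transform multiplicativity together with traciality, $\tau\bigl((ab)^k\bigr)=\tau\bigl((a^{1/2}ba^{1/2})^k\bigr)$. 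Citing $S$-transform multiplicativity rather than reproving it is exactly what any source would do here.

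Two caveats you should make explicit rather than leave implicit. First, the hypotheses as stated give weak convergence of $\mu_{A_n}$ and $\mu_{B_n}$ almost surely, and weak convergence alone does \emph{not} imply convergence of moments; your moment-method engine needs either an almost-sure uniform bound on the spectral radii, or the convention (standard in this literature, and effectively what ``asymptotically free almost surely'' means when defined through normalized trace moments) that the joint $*$-moments converge --- in which case the weak-convergence hypothesis serves only to identify the marginal limits as $\mu_1$ and $\mu_2$. You gesture at this packaging in your last paragraph but treat it as bookkeeping of null sets, whereas the moment/weak-convergence mismatch is the one place the argument could genuinely leak. Second, your added assumption that $B_n$ is self-adjoint (and that $\mu_1$, $\mu_2$ have nonvanishing first moments, required for the paper's definition of the $S$-transform and of $\boxtimes$) is indeed needed for the statement to parse; it is satisfied in the paper's application, where both factors are positive Wishart-type matrices, but it is an hypothesis you have quietly strengthened relative to the quoted statement.
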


Here, by
\begin{equation*}
\mu_{A_n} \stackrel{w}{\longrightarrow}  \mu_1 \qquad a.s.,
\end{equation*} 
as $n \to \infty$, we mean that 
\begin{equation*}
\int_{\mathbb{R}} f(t) d\mu_1(t) = \lim_{n \to \infty} \frac{1}{n} \sum_{i = 1}^n f(\lambda_i(A_n))
\end{equation*}
holds with probability $1$ for each bounded, continuous function $f(t)$. We say that $\mu_{A_n}$ converges weakly, almost surely to $\mu_1$.

\begin{remark}
Basically, Theorem \ref{thm: distr_prod_matrices} holds for all sequences $\{A_n\}$ and $\{B_n\}$ for which $A_n$ and $B_n$ are independent and for which at least the distribution of $A_n$ or $B_n$ is invariant under left and right multiplication by Haar distributed unitary matrices \cite{Couillet_Debbah, Akemann_Handbook, Voiculescu}. 
\end{remark}

\subsection{Product of Ginibre and truncated unitary matrices}

Let $U$ be a Haar distributed unitary random matrix of size $l \times l$ and let $T$ be the $m \times n$ upper left block of $U$ such that $l \geq m+n$. We call $T$ a truncated unitary (random) matrix of size $m \times n$ and the distribution is proportional to (see, e.g., \cite[Eq. (69)]{Fyodorov_Sommers})
\begin{equation*}
\det(I - T^{\ast}T)^{l - m-n} \chi_{T^{\ast}T \leq I}(T) dT
\end{equation*} 
where 
\begin{equation*}
\chi_{T^{\ast}T \leq I}(T) := \begin{cases}1 & \mbox{ if } I-T^{\ast}T \mbox{ is positive-definite} \\ 0 & \mbox{else}\end{cases}.
\end{equation*}

A complex Ginibre matrix $G$ of size $m \times n$ has independent entries whose real and
imaginary parts are independent and have a standard normal distribution with fixed variance. The probability distribution of $G$ is proportional to
\begin{equation*}
e^{- \Tr G^{\ast} G} dG.
\end{equation*}

We now take $s$ independent truncated unitary matrices $T_j$ of size $(n + \nu_j) \times (n + \nu_{j-1})$, with $\nu_j \geq 0$ and $\nu_0 = 0$, coming from an $l_j \times l_j$ unitary matrix. Furthermore, we take $r-s$ independent Ginibre random matrices $G_j$ of size $(n + \nu_j) \times (n + \nu_{j-1})$ for $j = s+1, \ldots, r$ and we define the product of independent matrices
\begin{equation} \label{eq: def_prod_yrs}
Y_{r,s} := G_r \ldots G_{s+1}T_s \ldots T_1.
\end{equation}
We then have the following theorem:

\begin{theorem} \label{thm: connection_rmt}
Let $T_j$ and $G_j$ be as described above. Furthermore, suppose that we have that $l_j - 2n\geq 0$ and $\nu_i$ remain fixed for all $j = 1, \ldots, s$ and all $i = 0, \ldots, r$, as $n \to \infty$. Then we have
\begin{equation}
\mu_{Y_n} \stackrel{w}{\longrightarrow} J_{r,s,1} \qquad a.s.,
\end{equation}
as $n \to \infty$, and where $J_{r,s,1}$ is described in Theorem \ref{thm: measure_mu} and $Y_n$ is defined as the rescaled Wishart-type product
\begin{equation}
Y_n := \frac{1}{n^{r-s}}Y_{r,s}^{\ast}Y_{r,s}.
\end{equation}
\end{theorem}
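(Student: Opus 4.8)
The plan is to combine the free-probability machinery of Theorem \ref{thm: distr_prod_matrices} with the explicit algebraic description of $J_{r,s,1}$ coming from Theorem \ref{thm: measure_mu}. The strategy splits into three stages: (i) reduce the limiting spectral distribution of $Y_n$ to an iterated free multiplicative convolution of the limiting distributions of the individual factors; (ii) identify each individual factor's limiting distribution; and (iii) match $S$-transforms to recognise the resulting convolution as $J_{r,s,1}$.

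For stage (i) I would write $Y_{r,s} = M_r \cdots M_1$ with $M_j = T_j$ for $j \le s$ and $M_j = G_j$ for $j > s$, and set $Z_k = M_k \cdots M_1$. Since the $\nu_i$ are fixed, every factor has size $n + O(1)$, so all matrices are asymptotically square of size $n$ and the $O(1)$ discrepancies in the number of zero eigenvalues are negligible in the limit. I would then argue by induction on $k$ that the empirical distribution of $\tfrac{1}{n^{p_k}} Z_k^{\ast} Z_k$ converges weakly almost surely, where $p_k$ is the number of Ginibre factors among $M_1, \ldots, M_k$. Using that the nonzero eigenvalues of $Z_{k+1}^{\ast} Z_{k+1} = Z_k^{\ast} (M_{k+1}^{\ast} M_{k+1}) Z_k$ coincide with those of $(M_{k+1}^{\ast} M_{k+1})(Z_k Z_k^{\ast})$, and that $Z_k Z_k^{\ast}$ shares its nonzero spectrum with $Z_k^{\ast} Z_k$, the induction step becomes an application of Theorem \ref{thm: distr_prod_matrices} to the product $(M_{k+1}^{\ast} M_{k+1})(Z_k Z_k^{\ast})$. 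Its hypotheses require asymptotic freeness, which holds because both Ginibre and truncated unitary matrices are bi-unitarily invariant (for a truncation, multiplication on either side by a Haar unitary can be absorbed into the Haar unitary generating it) and the factors are independent; this is precisely the situation described in the remark following Theorem \ref{thm: distr_prod_matrices}.

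For stage (ii), a single Ginibre factor gives $\tfrac{1}{n} G_j^{\ast} G_j \to FC_1$ (the Marchenko--Pastur law of ratio one, whose moments are the Catalan numbers $FC_1(n)$), which accounts for the global rescaling $n^{-(r-s)}$. For a truncated unitary factor I would show that $T_j^{\ast} T_j \to R_{1, \frac{1}{2}}$: the condition that $l_j - 2n \ge 0$ stay fixed forces the truncation ratio to tend to $\tfrac{1}{2}$, and one checks that this balanced truncation yields the arcsine law on $[0,1]$, whose moments are $\binom{2n}{n}/4^n = R_{1, \frac{1}{2}}(n)$. Assembling stages (i) and (ii) shows that the limit equals $FC_{r-s} \boxtimes R_{1, \frac{1}{2}}^{\boxtimes s}$.

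Stage (iii), which I expect to be the conceptual crux and the payoff linking back to Section \ref{sec: jac_moments}, is then a direct $S$-transform computation. By \eqref{eq: w_moments_series} with $a = 1$, the Stieltjes transform of $J_{r,s,1}$ is $G(z) = w(z)/z$, where $w$ solves \eqref{eq: algebraic_eq_w}, i.e. $w^{r+1} = x(w-1)(w+1)^s$. From \eqref{eq: def_psi_mu} one gets $\psi_{J_{r,s,1}}(z) = w(1/z) - 1$; writing $\eta = \psi(z)$ so that $w = \eta + 1$ and inverting via \eqref{eq: def_chi_mu} using $1/z = w^{r+1}/((w-1)(w+1)^s)$ yields $\chi_{J_{r,s,1}}(\eta) = \eta(\eta+2)^s/(\eta+1)^{r+1}$, hence by \eqref{eq: def_s_tr}
\[ S_{J_{r,s,1}}(\eta) = \frac{(\eta+2)^s}{(\eta+1)^r}. \]
On the other hand $S_{FC_{r-s}}(\eta) = (1+\eta)^{-(r-s)}$ and $S_{R_{1, \frac{1}{2}}}(\eta) = (\eta+2)/(\eta+1)$, so multiplying over the $r-s$ Ginibre and $s$ truncation factors gives exactly the same expression. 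Since both measures are compactly supported probability measures with non-vanishing first moment and equal $S$-transforms, they coincide, which finishes the proof. I expect the main obstacles to lie in stages (i) and (ii): the careful bookkeeping of matrix sizes and zero eigenvalues in the inductive product argument, and the rigorous identification of the truncated-unitary limit as the arcsine law $R_{1, \frac{1}{2}}$; once these are in place, stage (iii) is a routine calculation.
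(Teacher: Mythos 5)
Your proposal is correct and takes essentially the same route as the paper: reduce $Y_n$ to a product via the $AB$/$BA$ trick with $O(1)$ zero-eigenvalue bookkeeping, apply Theorem \ref{thm: distr_prod_matrices} using bi-unitary invariance, identify the factor limits as $FC$ and the arcsine law $R_{1,\frac{1}{2}}$, and match $S$-transforms. The only differences are cosmetic: you peel off every factor inductively where the paper quotes the known Fuss--Catalan limit $FC_{r-s}$ for the whole Ginibre block at once, and you compute $S_{J_{r,s,1}}(\eta) = (\eta+2)^s/(\eta+1)^r$ directly from the algebraic equation \eqref{eq: algebraic_eq_w} with $a=1$ and compare it to the product of Mlotkowski's $S$-transforms, whereas the paper runs the identical computation in the opposite direction (from $S_{\kappa}$ down to the algebraic equation for $xG_{\kappa}(x)$, then invokes the uniqueness in Theorem \ref{thm: measure_mu}).
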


\begin{proof}
We start with the observation that $Y_n$ has the same non-zero eigenvalues (counted with multiplicity) as 
$$Z_n := \frac{1}{n^{r-s}} (G_r \ldots G_{s+1})^{\ast}(G_r \ldots G_{s+1})(T_s \ldots T_1)(T_s \ldots T_1)^{\ast}$$ 
Moreover, the difference in the number of eigenvalues equal to zero is $\nu_{s}$ and thus the limiting eigenvalue distributions of $Y_n$ and $Z_n$ have to be equal, if they exist. It is known that the empirical eigenvalue distribution of $\frac{1}{n^{r-s}}(G_r \ldots G_{s+1})^{\ast}(G_r \ldots G_{s+1})$ converges weakly, almost surely to the Fuss-Catalan distribution $FC_{r-s}$, as $n \to \infty$ (see, e.g., \cite{Penson_Z}), which can be written as the Raney distribution $R_{r-s+1,1}$. Moreover, the distribution of each $G_j$ is invariant under left and right multiplication of Haar distributed unitary matrices and thus the same holds true for 
$$\frac{1}{n^{r-s}}(G_r \ldots G_{s+1})^{\ast}(G_r \ldots G_{s+1}).$$ 
In order to apply Theorem \ref{thm: distr_prod_matrices} we now have to determine the limiting eigenvalue distribution of $\tilde{T}_n := (T_s \ldots T_1)(T_s \ldots T_1)^{\ast}$. 

By the same arguments as before, we know that $(T_s \ldots T_1)(T_s \ldots T_1)^{\ast}$ has the same non-zero eigenvalues (counted with multiplicity) as $(T_{s-1} \ldots T_1)(T_{s-1} \ldots T_1)^{\ast}T_s^{\ast}T_s$ and the difference in the number of eigenvalues equal to zero is $|\nu_s - \nu_{s-1}|$. It is known that the empirical eigenvalue distribution of $T_s^{\ast}T_s$ converges weakly, almost surely to the arcsine measure on $(0, 1)$ if $l_s - 2n$ is fixed, as $n \to \infty$. By comparing the moments, one can see that this is the Raney distribution $R_{1, \frac{1}{2}}$. Moreover, one can check that the distribution of $T_s$ is also invariant under left and right multiplication of Haar distributed unitary matrices and so is the distribution of $T_s^{\ast}T_s$. As before, to apply Theorem \ref{thm: distr_prod_matrices} we now have to determine the limiting eigenvalue distribution of $(T_{s-1} \ldots T_1)(T_{s-1} \ldots T_1)^{\ast}$. Repeating this argument $s-1$ times we can conclude that 
\begin{equation}
\mu_{\tilde{T}_n} \stackrel{w}{\longrightarrow} R_{1, \frac{1}{2}}^{\boxtimes s} \qquad a.s.,
\end{equation}
as $n \to \infty$, and with $\tilde{T}_n := (T_s \ldots T_1)(T_s \ldots T_1)^{\ast}$. 

An application of Theorem \ref{thm: distr_prod_matrices} gives us that
\begin{equation*}
\mu_{Z_n} \stackrel{w}{\longrightarrow} \kappa \qquad a.s.,
\end{equation*}
as $n \to \infty$, where
\begin{equation} \label{eq: def_kappa}
\kappa := R_{r-s+1, 1} \boxtimes R_{1, \frac{1}{2}}^{\boxtimes s}.
\end{equation}

It remains to show that $\kappa = J_{r,s,1}$. Using the result from Mlotkowski \cite[Proposition 4.3]{Mlotkowski}, we know that
\begin{equation*}
S_{R_{r-s+1}, 1}(z) = \frac{1}{(1+z)^{r-s}}, \qquad S_{R_{1, \frac{1}{2}}}(z) = \frac{z+2}{z+1}
\end{equation*}
and hence
\begin{equation*}
S_{\kappa}(z) = \frac{(z+2)^s}{(z+1)^r}.
\end{equation*}
Using \eqref{eq: def_s_tr} this can be rewritten as
\begin{equation*}
\chi_{\kappa}(z) = \frac{z(z+2)^s}{(z+1)^{r+1}}.
\end{equation*}
Thus, if we replace $z$ by $\psi_{\kappa}(z)$ and use \eqref{eq: def_chi_mu} we obtain
\begin{equation*}
z = \frac{\psi_{\kappa}(z) (\psi_{\kappa}(z) + 2)^s}{(\psi_{\kappa}(z) + 1)^{r+1}}.
\end{equation*}
Finally applying identity \eqref{eq: def_psi_mu} and replacing $z$ by $1/z$ we arrive at
\begin{equation*}
\frac{1}{z} = \frac{(zG_{\kappa}(z) - 1)(z G_{\kappa}(z) + 1)^s}{(z G_{\kappa}z)^{r+1}}
\end{equation*}
and from this we can conclude that $w(x) = x G_{\kappa}(x)$ satisfies the algebraic equation
\begin{equation*}
w(x)^{r+1} - x (w(x) - 1)(w(x) + 1)^s = 0.
\end{equation*}
This is equation \eqref{eq: algebraic_eq_w} with $a = 1$ and thus, because of Theorem \ref{thm: measure_mu}, we obtain 
\begin{equation} \label{eq: kappa_equal_mu1}
\kappa = J_{r,s,1}.
\end{equation} 
\end{proof}

\begin{remark} \label{remark: lim_distr_case_s_1}
Theorem \ref{thm: connection_rmt} in combination with equations \eqref{eq: def_kappa} and \eqref{eq: kappa_equal_mu1} gives us that 
\begin{equation*}
J_{r,0, 1} = FC_r, \qquad J_{r,1,1} = R_{\frac{r+1}{2}, \frac{1}{2}}.
\end{equation*}
This can be seen immediately by using
\begin{equation*}
R_{r, 1} \boxtimes R_{1, \frac{1}{2}} = R_{\frac{r+1}{2}, \frac{1}{2}},
\end{equation*}
which is a special case of the identity stated in \cite[Proposition 4.3]{Mlotkowski}. This means in particular that
\begin{equation*}
J_{r, 0, 1}(n) = FC_r(n), \qquad J_{r, 1, 1}(n) = R_{\frac{r+1}{2}, \frac{1}{2}}(n)
\end{equation*}
for every $n \in \mathbb{N}$. It is interesting to remark that for these distributions explicit and elementary forms of the densities can be found by the method of parametrization (see, e.g., \cite{Forrester_Liu, MNPZ, Neuschel, Neuschel_Stivigny}).
\end{remark}

\begin{remark}
The statements of Theorem \ref{thm: measure_mu} and Theorem \ref{thm: connection_rmt} are restricted to the case $s < r$ because of several technical issues that arise in the case $r = s$. However, it is natural and interesting to ask whether our results can be extended to this case.
\end{remark}

\section*{Acknowledgments}

We thank Prof. Arno Kuijlaars for many valuable discussions. The last two authors are supported by KU Leuven Research Grant OT/12/073 and the Belgian Interuniversity Attraction Pole P07/18.

\end{document}